\newtheorem{proposition}{Proposition}
\theoremstyle{definition}
\newtheorem{example}{Example}
\newcommand{\revisionone}[1]{#1}
\DeclareMathOperator{\tr}{tr}
\DeclareMathOperator{\prox}{prox}
\title{Defining Lyapunov functions as the solution of a performance estimation saddle point problem}
\author{\fnm{Olivier} \sur{Fercoq}}\email{olivier.fercoq@telecom-paris.fr}
\affil{\orgdiv{LTCI}, \orgname{Télécom Paris, Institut Polytechnique de Paris}, 
	
\orgaddress{\street{19 place Marguerite Perey}, \city{Palaiseau}, \postcode{91120}, \country{France}}}
\begin{document}

\abstract{
In this paper, we reinterpret quadratic Lyapunov functions as solutions to a performance estimation saddle point problem. This allows us to automatically detect the existence of such a Lyapunov function and thus numerically check that a given algorithm converges. The novelty of this work is that we show how to 
define the saddle point problem using the PEPit software  and
then solve it with DSP-CVXPY.

This combination gives us a very strong modeling power because defining new points and their relations across iterates is very easy in PEPit. We can without effort define auxiliary points used for the sole purpose of designing more complex Lyapunov functions, define complex functional classes like the class of convex-concave saddle point problems whose smoothed duality gap has the quadratic error bound property or study complex algorithms like primal-dual coordinate descent method.
}

\keywords{Performance estimation problem, Lyapunov function, computer-aided proofs}

\maketitle

\section{Introduction}

Performance estimation problems (PEP), introduced in \cite{drori2014performance}, are optimization problems whose goal is
to study the worst case performance of optimization algorithms.
When combined with interpolation results on the functional class of
interest, they can be used to show tighter bounds and design new, faster algorithms~\cite{taylor2017convex}. 
However, the optimization algorithms that we get with PEP have a number
of variables and constraints that grow linearly with the number of iterations we wish to consider. Since they are eventually solved 
using semi-definite programming solvers, this can be quickly a challenging task.

In order to study the convergence of complex algorithms, a widely used 
technique is to exhibit a Lyapunov function, which is a nonnegative function whose value is strictly decreasing at each iteration. It has been shown in~\cite{upadhyaya2024automated} that performance estimation problems can be used to automatically detect the presence of a quadratic Lyapunov function and thus numerically check the convergence of an algorithm. This was used for instance to show that the Chambolle-Pock algorithm converges for a wider range of step sizes than previously known~\cite{banert2023chambolle}.

In this paper, we reinterpret quadratic Lyapunov functions as the solutions to a performance estimation saddle point problem. Like in \cite{upadhyaya2024automated}, this allows us to automatically detect the existence of such a Lyapunov function and thus numerically check that a given algorithm converges. The novelty of this work is that we show how to 
define the saddle point problem using the PEPit software \cite{goujaud2024pepit} and
then solve it with DSP-CVXPY \cite{schiele2023disciplined}.
This combination gives us a very strong modeling power because defining new points and their relations across iterates is very easy in PEPit.

We conducted three numerical experiments to show the interest of our contribution. Firstly, we show that we can define complex functional classes and analyze the worst case behavior of algorithms on this class. We considered the class of convex-concave saddle point problems whose smoothed duality gap has the quadratic error bound property \cite{fercoq2023quadratic} and showed that the primal-dual hybrid gradient method~\cite{chambolle2011first} converges linearly under this assumption. More precisely, our study suggests that the fastest rate is not attained with the largest step sizes.

Secondly, we show that we can numerically study a randomized algorithm, namely randomized coordinate gradient descent~\cite{nesterov2012efficiency}. Indeed, if we try to estimate all possible outcomes of the algorithm, we quickly face an explosion in the number of possibilities. Thanks to this study, we could find a slightly improved worst case bound on the expected
objective value.

Finally, we show that we can study complex algorithms like the primal-dual coordinate descent method \cite{alacaoglu2020random}.
In that case, we reached the limits of the approach, with solvers struggling to return accurate solutions and the solutions returned difficult to interpret.

\revisionone{
\section{Notation}
\label{sec:notation}

In this paper, we are going to study the convergence of optimization algorithms using Lyapunov functions.
We shall consider a class of functions $\mathcal F$ satisfying some properties like convexity or smoothness. 
A function $f \in \mathcal F$ will be real valued $f: \mathcal X \to \mathbb R$ and we will consider
a list of points $x = (\xi_i)_{1 \leq i \leq p} \in \mathcal X^p$ on which an algorithm $A$ may act.

Those $p$ points, linked to the concept of leaf points in PEPit \cite{goujaud2024pepit}, can be considered as intermediate points in the construction of the next iteration. We will assume that they are not linearly dependent but they may depend one on the other 
with relation that involve the function $f$.
Then we will denote $A(f, x) \in \mathcal X^p$ the next iterate, which is the transformation of each leaf point $\xi_i$ into a new vector of $\mathcal X$. Of course the way each iterations proceeds depends on the function $f$.

For instance, to encode the fixed-step gradient descent algorithm on the class $\mathcal F_{0,L}$ of convex functions with $L$-Lipschitz gradient, we shall take $\xi_1 \in \mathcal X$, $\xi_2 = \nabla f(\xi_1)$, $\xi_3 \in \arg\min f$ and 
\begin{equation*}
A(f, [\xi_1, \xi_2, \xi_3]) = [\xi_1 - \gamma \xi_2, \nabla f(\xi_1 - \gamma \xi_2),\xi_3] \;.
\end{equation*}

By an abuse of notation, for a matrix $Q \in \mathbb R^{p \times p}$ and a list of vectors $x \in \mathcal X^p$, we shall denote $x^\top Q x = \sum_{i=1}^p\sum_{j=1}^p Q_{i,j} \langle x_i, x_j\rangle$.
}

\section{From Lyapunov function to saddle point problem}

A widely used technique to prove convergence of an algorithm is to find a Lyapunov function $V \geq 0$ such that
\begin{align}
\label{eq:def_lyap}
V(f, A(f, x)) + R(f, x) \leq V(f, x) \qquad \forall f \in \mathcal F, \forall x \in \mathcal X
\end{align}
where $A(f,x)$ is the outcome of one iteration of algorithm $A$ applied on function $f$ at the point $x$. In order to prove linear convergence, we are looking for a real number $0 < \rho < 1$ and a function $V$ such that 
\begin{align}
R(f, A(f,x)) \leq V(f, A(f, x)) \leq \rho V(f, x) \qquad \forall x \in \mathcal X, \forall f \in \mathcal F
\end{align}

\begin{example}
Consider the gradient descent method on the class of differentiable convex functions whose gradient is $L$-Lipschitz continuous. We can choose:
\begin{itemize}
	\item $\mathcal X = \mathbb R^n$,
	\item $f \in \mathcal F$ if and only if for all $(x^1,x^2) \in \mathcal X, f(x^2) \geq f(x^1) + \langle \nabla f(x^1), x^2 - x^2 \rangle + \frac{1}{2L} \|\nabla f(x^1)-\nabla f(x^2)\|^2$,
	\item \revisionone{For $\xi_1 \in \mathcal X$, $\xi_2 = \nabla f(\xi_1)$, $\xi_3 \in \arg\min f$, define $A(f,x) = A(f, [\xi_1, \xi_2, \xi_3]) = [\xi_1 - \gamma \xi_2, \nabla f(\xi_1 - \gamma \xi_2),\xi_3]$} for some step size parameter $\gamma \in \mathbb R$,
	\item $R(f, x) = \revisionone{f(\xi_1) - f(\xi_3)}$.
\end{itemize}
If we are able to find a Lyapunov function $V$ then sublinear convergence of the function value gap will follow by standard arguments~\cite{nesterov2013introductory}.
\revisionone{
Indeed, we define $x_{k+1} = A(f, x_k)$ and we sum \eqref{eq:def_lyap}:
\begin{align*}
&V(f, x_{k+1}) + R(f, x_k) \leq V(f, x_k) \\
&V(f, x_K) + \sum_{k=0}^{K-1} R(f,x_k) \leq V(f, x_0)
\end{align*}
Since $V(f, X_K) \geq 0$, this implies that $\sum_{k=0}^{+\infty} R(f,x_k) < +\infty$ and so $\lim_{k \to +\infty} R(f,x_k) = 0$. This can also be used to get finite-time convergence rates.
}
\end{example}

Let us now present the main derivation of this paper. We shall consider the set $\mathcal Q$ of quadratic Lyapunov functions parameterized as 
\begin{equation*}
V(f, x) = x^\top Q x + q^\top N(f, x)
\end{equation*}
for some positive semi-definite matrix $Q \in \mathbb S^p_+$, some nonnegative vector $q$, some minimizer $x^*$ of $f$ and nonnegative quantities $N(f, x)$ that depend on the problem only. \revisionone{Here we abuse notation as explained in Section~\ref{sec:notation}.}

\begin{align}
&\exists V \in \mathcal Q \text{ s.t. } V(f, A(f, x)) + R(f, x) \leq V(f, x) \qquad \forall f \in \mathcal F, \forall x \in \mathcal X^{\revisionone{p}}
\label{lyap-def}\\
&\Leftrightarrow \quad 
\exists V \in \mathcal Q \text{ s.t. } \max_{f \in \mathcal F, x \in \mathcal X^p} V(f, A(f, x)) + R(f,x) - V(f,x) \leq 0 
\label{lyap-max}\\
&\Leftrightarrow \quad
\min_{\{V \in \mathcal Q\}} \max_{\{f \in \mathcal F, x \in \mathcal X^p\}} V(f, A(f, x)) + R(f, x) - V(f, x) \leq 0
\label{lyap-minmax}
\end{align}

At this point, this does not really help. But the inner maximization problem fits into the framework of performance estimation problems \cite{drori2014performance,taylor2017convex}.
\revisionone{Using this fact, in the next section, we are going to show how the search for a quadratic Lyapunov function can be written as a bilinear convex-concave saddle point problem involving semi-definite variables.}

\section{Constructing the transition matrix using PEPit}

\revisionone{
Consider the problem 
\begin{align*}
\max_{\{f \in \mathcal F, x \in \mathcal X^p\}}  R(f, x)
\end{align*}
As shown in~\cite{taylor2017convex}, it can be written equivalently as
\begin{align*}
&\max_{G \succeq 0, F} c^\top F + \langle C, G\rangle \\
&\text{s.t. } a_i + b_i^\top F + \langle D_i, G \rangle \leq 0 , \forall i \in I
\end{align*}
where $G$ is a semi-definite optimization variable, $F$ is a vectorial optimization variable and $c, C, a_i, b_i, D_i$ are given 
scalars, vectors and matrices that encode the function $R$ and the constraints defining $\mathcal F$ and the steps of the algorithm $A$.
Suppose that one iteration of the algorithm can be described using $\bar p$ leaf points $(\xi_1, \ldots, \xi_{\bar p})$. Then the matrix $G$ is called the Gram matrix and is such that $G_{i,j} = \langle \xi_i, \xi_j\rangle$ for all $j \in \{1, \ldots, \bar p\}$.
The change of variable from $x = (\xi_1, \ldots, \xi_{\bar p})$ to $G$ is surjective as soon as $\dim(\mathcal X) \geq \bar p$, as can be seen thanks to a Cholesky factorization.
With this matrix, we can rewrite $x^\top Q x = \sum_{i,j} Q_{i,j} \langle \xi_i, \xi_j\rangle = \tr(Q G)$.

Let us now consider a vector $y \in \mathcal X$. 
When defined in PEPit, any such vector is either a new leaf point
or a linear combination of previously defined leaf point. In the latter case, PEPit stores the leaf point representation of $y$.
In this paper, we shall denote $\pi(y)$ this leaf point representation, which means that $y = \sum_{k=1}^{\bar p} \pi(y)_k \xi_k$. In particular, $\pi(\xi_i) = e_i$, the $i$th canonical basis vector.

We now define the matrix $\Sigma$, which we shall call the transition matrix, by
\begin{equation*}
\Sigma \pi(\xi_i) = \pi((A(f, (\xi_j)_{1 \leq j \leq p}))_i) \;.
\end{equation*}
for all $i \in \{1, \ldots, p\}$. In~\cite{upadhyaya2024automated}, a similar transition matrix is defined. More precisely it is the transpose of what we present here.
Let us have a look at how the Gram matrix is transformed when one iteration takes place:
\begin{align*}
G^+_{i,j} & = \langle (A(f,x))_i, (A(f, x))_j\rangle = \sum_{k=1}^{\bar p} \sum_{l=1}^{\bar p} \pi(A(f,x))_i)_k\pi((A(f, x))_j)_l\langle \xi_k, \xi_l \rangle = \sum_{k=1}^{\bar p} \sum_{l=1}^{\bar p} \Sigma_{k,i} \Sigma_{l,j}G_{k,l} \\
G^+& = \Sigma^\top G \Sigma
\end{align*}
Finally, we get 
\begin{align*}
A(f,x)^\top Q A(f,x) = \sum_{i,j} Q_{i,j} \langle (A(f,x))_i, (A(f, x))_j\rangle = \sum_{i,j} Q_{i,j} G_{i,j}^+ = \tr(Q \Sigma^\top G \Sigma) \;.
\end{align*}
Using similar ideas, we can define a transition matrix $\sigma$ for leaf expressions, such that if $N(f, x) = N F$
then $N(f, A(f,x)) = N \sigma F$.

Gathering these we can write \eqref{lyap-minmax} as a saddle point
problem with semi-definite variables:
}
\begin{align*}
&\exists V \in \mathcal Q \text{ s.t. } V(A(f, x), x^*) + R(x, x^*) \leq V(x, x^*) \\
&\Leftrightarrow \quad
\revisionone{\min_{\{Q \succeq 0, q \geq 0\}} \max \left\{\begin{matrix} \tr(Q \Sigma^\top G \Sigma) + q^\top N \sigma F + c + \tr(C G) - \tr(Q G) - q^\top N F \hfill \\
\text{subject to: } a_i + b_i^\top F + \tr(D_iG) \leq 0, \quad \forall i \in I \end{matrix}\right\} }\leq 0
\end{align*}

The constraints of the inner problem are positive semi-definiteness, linear inequality and linear equality constraints that are automatically built by the PEPit software~\cite{goujaud2024pepit}.
We are left with a convex-concave saddle point problem which can be solved by the off-the-shelf solver DS-CVXPY~\cite{schiele2023disciplined}.

We now show how PEPit can help us implement the transition matrix $\Sigma$. When we define a new point in PEPit, two cases can occur. It is either a linear combination of previously defined points or it is what is called a leaf point.
Defining a new leaf point means that a new variable is required in the performance estimation problem. It is then associated with constraints that will ensure that it truly represents what it is meant to represent.

For instance, when defining the gradient descent method on a function $f$, 
we first define $x^* = 0$, where $\nabla f(x^*) = 0$ and an initial point $x$, which will be a leaf point with constraint $\|x - x^*\|^2 \leq R$. 
Then, in order to represent $x^+= x - \gamma \nabla f(x)$, we need a second 
leaf point, which is $\nabla f(x)$. PEPit automatically defines all the 
constraints that are necessary to ensure that $\nabla f(x)$ is interpolable with a function of the class chosen for $f$.
The new iterate $x^+$ is not a leaf point because it can be recovered as a linear combination of other points.
In order to illustrate the construction of the matrix $\Sigma$, suppose $R(x) = \|\nabla f(x^+)\|^2$, so that we are also interested in $\nabla f(x^+)$.

The Gram matrix $G$ is such that 
\[
G = \begin{bmatrix}
x^\top x & x^\top \nabla f(x) & x^\top \nabla f(x^+) \\
\nabla f(x)^\top x & \nabla f(x)^\top \nabla f(x) & \nabla f(x)^\top \nabla f(x^+) \\
\nabla f(x^+)^\top x & \nabla f(x^+)^\top \nabla f(x) & \nabla f(x^+)^\top \nabla f(x^+)
\end{bmatrix} \; .
\]
We take $p = 2$ (the number of leaf points we consider in our Lyapunov function) and we have $\bar p = 3$ (the number of leaf points constructed by PEPit to describe the algorithm).
Then since $\pi(x) = [1, 0, 0]$ and $\pi(\nabla f(x)) = [0,1,0]$,
$\pi(x^+) = [1, -\gamma, 0]$. We also have $\pi(\nabla f(x^+)) = [0, 0, 1]$. Hence,
\[
\Sigma = \begin{bmatrix}
1 & 0 & 0 \\
-\gamma & 0 & 0 \\
0 & 1 & 0
\end{bmatrix}
\]
We can compute 
\[
\Sigma^\top G \Sigma = \begin{bmatrix}
(x^+)^\top x^+ & (x^+)^\top \nabla f(x^+) & 0 \\
\nabla f(x^+)^\top x^+ & \nabla f(x^+)^\top \nabla f(x^+) & 0 \\
0 & 0 & 0
\end{bmatrix} \;.
\]
In order to construct $\Sigma$, we consider the leaf points where the Lyapunov function $V$ is going to be computed and their images after applying the algorithm. Let $u$ be such a leaf point ($u \in \{x, \nabla f(x)\}$ for the gradient descent case). If we consider the decomposition dictionary
of the product $u^\top u^+$, then we directly obtain the row of $\Sigma$ corresponding to $u$. This decomposition dictionary $W$ is a matrix of the same size as $G$ such that $\tr(W^\top G) = u^\top u^+$. It is implemented in PEPit, so directly usable. Note that we could also implement the matrix $\Sigma$ from the decomposition dictionary of $u^+$ in leaf points but the proposed solution allows one to directly manipulate matrices.

\section{Numerical experiments}

We considered three numerical experiments that show how the technique of automatic Lyapunov functions may be used to better understand optimization algorithms.
The code for the numerical experiments is available on \href{https://perso.telecom-paris.fr/ofercoq/Software.html}{https://perso.telecom-paris.fr/ofercoq/Software.html}

\subsection{Linear convergence of PDHG with longer steps under quadratic error bound of the smoothed gap}

Consider the Primal-Dual Hybrid Gradient (PDHG) algorithm designed to solve the saddle point problem
\[
\min_{x\in \mathcal X} \max_{y \in \mathcal Y} f(x) + \langle Mx, y \rangle - g(y) \;.
\]
where $f$ and $g$ are be two convex functions whose proximal operator is easy to compute
and $M$ is a linear operator.
PDHG is the algorithm
\begin{align*}
&\bar y_{k+1} = \prox_{\sigma g}(y_k + \sigma M x_k) \\
&x_{k+1} = \bar x_{k+1} = \prox_{\tau f}(x_k - \tau M^\top \bar y_{k+1}) \\
&y_{k+1} = \bar y_{k+1} + \sigma M(\bar x_{k+1} - x_k)
\end{align*}

It has been shown in \cite{fercoq2023quadratic} that PDHG converges linearly under the assumption that the smoothed duality gap has the quadratic error bound property.
This proof of linear convergence requires $\gamma < 1$ where $\gamma = \sigma \tau \|M\|^2$, $\sigma$ is the dual step size, $\tau$ is the primal step size. 
Nevertheless, the algorithm does converge for $\gamma = \sigma \tau \|M\|^2 < 4/3$ \cite{li2022improved}. 
In the experiment summarized in Figure~\ref{fig:pdhg_qebsm_gamma}, we numerically extend the analysis of \cite{fercoq2023quadratic} to the full range $0 < \gamma < 4/3$ for the case $\tau = \gamma = \frac{\sqrt{\gamma}}{\|M\|}$. 
Of course, the result may be subject to numerical instabilities and we do not have a closed formula for the rate, but it has been obtained without having to manipulate a complex combination of inequalities.

\begin{figure}
	\centering
	
	\includegraphics[width=0.7\linewidth]{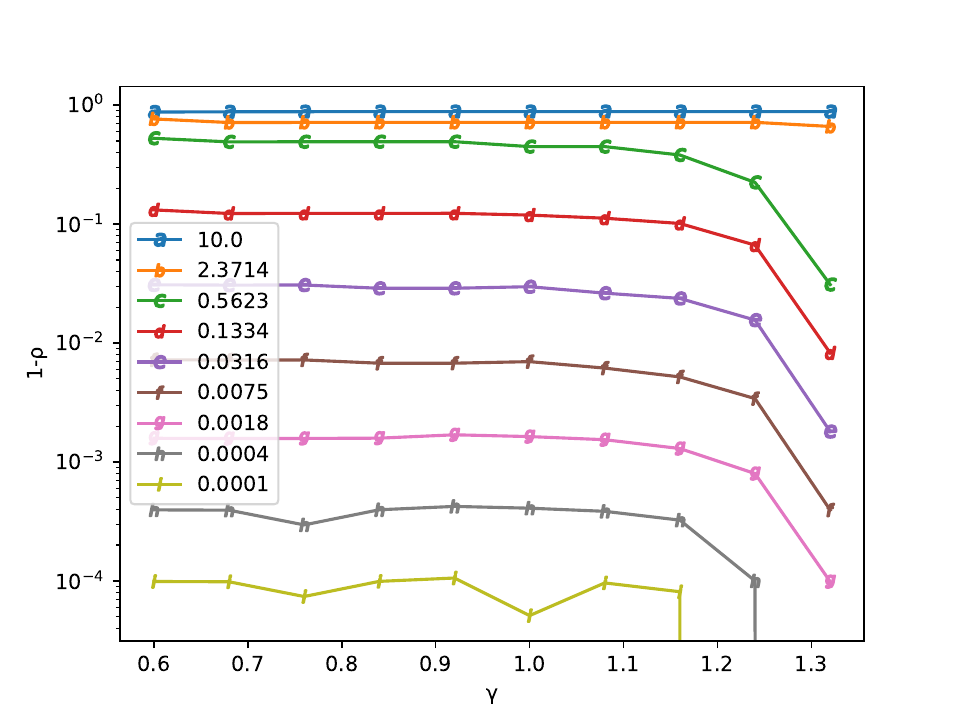}
	\caption{Rate of convergence of PDHG as a function of $\gamma$ ($1-\rho$ close to $10^0$ means very fast convergence). Each curve corresponds to a given value for the quadratic error bound constant for the smoothed gap with parameter $\beta=1$.
	Note that as shown in \cite{li2022improved}, $\gamma$ can be chosen up to 4/3. However, in terms of rate of convergence, this figure suggests not to try approaching too much the convergence frontier.}
\label{fig:pdhg_qebsm_gamma}
\end{figure}

\subsection{Better Lyapunov function for coordinate descent}

In this experiment, we show how to use our results for a randomized algorithm. We chose coordinate gradient descent. We consider a convex differentiable function $f$ whose $i$th partial derivative is $L_{i}$-Lipschitz continuous. Suppose we have $d$ blocks of coordinates. The algorithm is 
\begin{align*}
&\text{Generate }I_{k+1} \sim U(\{1, \ldots, d\}) \\
& x_{k+1} = x_k - \frac{1}{L_{I_{k+1}}} \nabla_{I_{k+1}} f(x_k) U_{I_{k+1}}
\end{align*}
where $U_i$ is the submatrix of the identity corresponding to block $i$.

In~\cite{lu2015complexity}, it is shown that for all $k$,
\begin{align*}
\mathbb E[d(f(x_{k+1}) - f(x_*)) + \frac{d}{2} \|x_{k+1} - x_*\|^2_L | x_k] \leq (d-1)(f(x_k) - f(x_*)) + \frac{d}{2} \|x_{k} - x_*\|^2_L
\end{align*}
Our goal is to recover this result, and perhaps improve it.

Since it is a randomized algorithm, we need to consider all the possible outcomes of the random process. Fortunately, the Lyapunov analysis amounts to studying only one iteration, from $x_0$ to $x_1$, so that there are only $d$ possible outcomes.
We construct $d$ possible values for $x_{1}(I_{1} = i)$ and $d$ transition matrices $\Sigma(I_{1} = i)$. The final Lyapunov function is then the average of all the possible outcomes.

Using our automatic Lyapunov technique, we considered this algorithm for the case $L_{i} = 1$ for all $i$ and various values for $d$. We then obtain nonnegative numbers $q_1$ and $q_2$ such that for all $f$ satisfying the assumptions, 
\begin{align*}
\mathbb E[q_1 (f(x_1) - f(x_*)) + q_2 \|x_1 - x_*\|^2_L] \leq (q_1-1) (f(x_0) - f(x_*)) + q_2 \|x_0 - x_*\|^2_L
\end{align*}
We also tried considering additional terms involving $\nabla f(x_0)$ but this did not improve the results.
In Table~\ref{tab:rcd} we can see what the saddle point problem resolution yields.
\begin{table}
	\centering
\begin{tabular}{|l|l|l|}
\hline
$d$ & $q_1$ & $q_2$ \\
\hline
2 & 1.000 & 0.999 \\
3 & 1.997 & 1.499 \\
4 & 3.000 & 2.000 \\
5 & 3.999 & 2.499 \\
8 & 7.005 & 4.002 \\
10 & 8.997 & 4.999 \\
\hline
\end{tabular}
\caption{Automatically determined Lyapunov function parameters for coordinate gradient descent.}
\label{tab:rcd}
\end{table}

From this table, we can conjecture a slightly improved Lyapunov inequality, namely
\begin{align}
\mathbb E[(d-1)(f(x_1) - f(x_*)) + \frac d2 \|x_1 - x_*\|^2_L] \leq (d-2) (f(x_0) - f(x_*)) + \frac d2 \|x_0 - x_*\|^2_L \;.
\label{conj:cd}
\end{align}

\begin{proposition}
The conjecture \eqref{conj:cd} is true for all $d \geq 1$.
\end{proposition}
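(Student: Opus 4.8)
The plan is to do the one-iteration Lyapunov computation by hand, reduce the stochastic inequality to a single deterministic inequality on the starting point, and then prove that inequality with a short auxiliary-function argument. Throughout I fix $x_0 = x$, write $g_i = \nabla_i f(x)$, and use the abbreviations $G = \langle \nabla f(x), x - x_* \rangle$, $S = \sum_{i=1}^d \frac{1}{L_i}\|g_i\|^2$ and $D = f(x) - f(x_*)$. The $d$ equally likely outcomes are $x_1^{(i)} = x - \frac{1}{L_i} g_i U_i$, and since only block $i$ moves, a direct expansion gives $\|x_1^{(i)} - x_*\|_L^2 = \|x - x_*\|_L^2 - 2\langle g_i, x_i - x_{*,i}\rangle + \frac{1}{L_i}\|g_i\|^2$, so that
\[
\mathbb E[\|x_1 - x_*\|_L^2] = \|x - x_*\|_L^2 - \frac{2G}{d} + \frac{S}{d} \;.
\]
The block descent lemma $f(x_1^{(i)}) \le f(x) - \frac{1}{2L_i}\|g_i\|^2$, which is valid because $\nabla_i f$ is $L_i$-Lipschitz, averages to $\mathbb E[f(x_1)] \le f(x) - \frac{S}{2d}$.

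Next I would substitute both expressions into \eqref{conj:cd}. Using $d - 1 \ge 0$ (this is where $d \ge 1$ enters) to keep the descent estimate on the correct side of the inequality, every occurrence of $\mathbb E[f(x_1)]$ and $\mathbb E[\|x_1 - x_*\|_L^2]$ cancels against the right-hand side, and the whole claim collapses to the single deterministic inequality
\[
\frac{1}{2d}\sum_{i=1}^d \frac{1}{L_i}\|\nabla_i f(x)\|^2 \le D_f(x_*, x) \;,
\tag{$\star$}
\]
where $D_f(x_*, x) = f(x_*) - f(x) - \langle \nabla f(x), x_* - x\rangle = G - D \ge 0$ is the Bregman divergence of $f$ and we have used $\nabla f(x_*) = 0$.

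The main obstacle is precisely $(\star)$. The naive estimate, obtained by combining the descent lemma with $f(x_1^{(i)}) \ge f(x_*)$, only yields the weaker bound $\frac{S}{2d} \le f(x) - f(x_*) = D$, which is insufficient because the target right-hand side $G - D$ can be strictly smaller than $D$. The trick I would use is to apply a gradient-norm-to-suboptimality bound not to $f$ itself but to the auxiliary function $\phi(z) = D_f(z, x) = f(z) - f(x) - \langle \nabla f(x), z - x\rangle$. This $\phi$ is convex, it inherits the block-smoothness constants $L_i$ of $f$ since the two differ only by an affine term, and by convexity it satisfies $\phi \ge 0 = \phi(x)$, hence $\min \phi = 0$. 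For any block-smooth function that is bounded below, averaging the $d$ one-block descent steps $w^{(i)} = w - \frac{1}{L_i}\nabla_i\phi(w)U_i$ and using $\phi(w^{(i)}) \ge \min \phi$ gives $\frac{1}{2d}\sum_i \frac{1}{L_i}\|\nabla_i\phi(w)\|^2 \le \phi(w) - \min\phi$. Evaluating this at $w = x_*$, where $\nabla_i\phi(x_*) = -\nabla_i f(x)$ and $\phi(x_*) = D_f(x_*, x)$, produces exactly $(\star)$.

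I would close by verifying the boundary case $d = 1$ directly, where $(\star)$ reduces to the standard cocoercivity inequality of an $L_1$-smooth convex function, and by observing that the computation is unchanged for the experiment's choice $L_i = 1$. It is worth emphasizing that convexity of $f$ is used only to guarantee $\phi \ge 0$, while block-smoothness is used only through the descent lemma, so the proof requires nothing beyond the stated function class; this also explains structurally why the improved constant $d-1$ in \eqref{conj:cd} is attainable.
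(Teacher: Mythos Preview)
Your proof is correct and hinges on the same key inequality as the paper's, namely $(\star)$: $\frac{1}{2d}\|\nabla f(x_0)\|_{L^{-1}}^2 \le D_f(x_*,x_0)$, which in the paper appears as the cocoercivity inequality~\eqref{eqn:cocoercivity_d} specialized at $x=x_*$. Both arguments obtain it via the same auxiliary function $\phi(z)=f(z)-f(x_0)-\langle\nabla f(x_0),z-x_0\rangle$. The paper takes a slight detour: it first establishes that $f$ is $d$-smooth in the weighted norm $\|\cdot\|_L$ (using the convex-combination identity $h=\tfrac1d\sum_i d\,h^{(i)}e_i$), and then applies a single full-gradient descent step to $\phi$. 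Your route is a bit more direct: you apply the block descent lemma to $\phi$ coordinate by coordinate and average, bypassing the need to ever state global $d$-smoothness. The algebraic reduction from the Lyapunov inequality to $(\star)$ is also organized differently---you substitute the expected quantities and cancel, while the paper rewrites a three-point identity pathwise before taking expectation---but the content is the same.
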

\begin{proof}
We are first going to show that, in the weighted norm $\|x\|_L = (\sum_{i=1}^n L_{i} (x^{(i)})^2)^{1/2}$, $\nabla f$ is $d$-Lipschitz continuous.
Since $h = \frac{1}{d} \sum_{i=1}^d d h^{(i)} e_i$, we have
\begin{align*}
f(x+h) &\leq \frac{1}{d} \sum_{i=1}^d f(x+d h^{(i)} e_i)  \leq \frac{1}{d} \sum_{i=1}^d \Big(f(x) + d \nabla_{I} f(x) h^{(i)} + \frac{L_{I}}{2}d^2 (h^{(i)})^2\Big) = f(x) + \langle \nabla f(x), h \rangle + \frac{d}{2} \|h\|_L^2
\end{align*}
where, for all $i$, we used the fact that $\nabla_{I} f$ is coordinate wise Lipschitz continuous. Let us fix $x_0$ and define $\phi: x \mapsto f(x) - \langle\nabla f(x_0), x-x_0\rangle$. The function $\phi$ is convex and $\nabla \phi(x_0) = 0$. Using the elementwise division in $\frac{\nabla \phi(x)}{L}$, we have
\begin{align*}
f(x_0) &= \phi(x_0) \leq \phi\big(x - \frac{1}{d} \frac{\nabla \phi(x)}{L} \big)
= f\big(x - \frac{1}{d} \frac{\nabla f(x)-\nabla f(x_0)}{L}\big) - \big\langle \nabla  f(x_0), x - \frac{1}{d} \frac{\nabla f(x)-\nabla f(x_0)}{L} - x_0 \big\rangle \\
&\leq f(x) - \frac 1d \langle \nabla f(x),  \frac{\nabla f(x)-\nabla f(x_0)}{L}\rangle + \frac{d}{2d^2} \Big\|\frac{\nabla f(x)-\nabla f(x_0)}{L}\Big\|^2_L \\
& \quad\qquad+  \frac 1d \langle \nabla f(x_0),  \frac{\nabla f(x)-\nabla f(x_0)}{L}\rangle - \langle \nabla f(x_0), x - x_0\rangle 
\end{align*}
Hence, rearranging, we get
\begin{align}
f(x) \geq f(x_0) + \langle \nabla f(x_0) , x-x_0\rangle + \frac {1}{2d} \|\nabla f(x)-\nabla f(x_0)\|^2_{L^{-1}} \;.
\label{eqn:cocoercivity_d}
\end{align}
By summing this inequality and the one where we exchange the roles of $x$ and $x_0$, we obtain
\begin{align*}
\|\nabla f(x)-\nabla f(x_0)\|^2_{L, *} &= \|\nabla f(x)-\nabla f(x_0)\|^2_{L^{-1}} \leq d \langle \nabla f(x) - \nabla f(x_0) , x-x_0\rangle \\
&\leq d \|\nabla f(x)-\nabla f(x_0)\|_{L, *} \|x - x_0\|_L
\end{align*}
which leads to $\|\nabla f(x)-\nabla f(x_0)\|_{L, *} \leq d \|x - x_0\|_L$.

We can now proceed with the rest of the proof, using the notation $I=I_1$.
\begin{align*}
&(d-1) (f(x_1) - f(x_*)) \leq (d-1) (f(x_0) - f(x_*) + \langle \nabla_{I} f(x_0), x^{(I)}_1 - x^{(I)}_0 \rangle + \frac{L_{I}}{2} \|x^{(I)}_1 - x^{(I)}_0\|^2) \\
& = (d-1) (f(x_0) - f(x_*)) + d\Big(\langle \nabla_{I} f(x_0), x_*^{(I)} - x_0^{(I)} \rangle + \frac{L_{I}}{2} \|x_*^{(I)} - x_0^{(I)}\|^2 - \frac {L_{I}}2 \|x_*^{(I)} - x_1^{(I)}\|^2\Big)  + \frac{1}{2 L_{I}} \|\nabla_{I} f(x)\|^2
\end{align*}
where we used the fact that for all $x^*$, 
\begin{align*}
-\frac{1}{2 L_{I}} \|\nabla_{I} f(x)\|^2 & = \langle \nabla_{I} f(x_0), x^{(I)}_1 - x^{(I)}_0 \rangle + \frac{L_{I}}{2} \|x^{(I)}_1 - x^{(I)}_0\|^2\\
& = \langle \nabla_{I} f(x_0), x_*^{(I)} - x_0^{(I)} \rangle + \frac{L_{I}}{2} \|x_*^{(I)} - x_0^{(I)}\|^2 - \frac {L_{I}}2 \|x_*^{(I)} - x_1^{(I)}\|^2 \;.
\end{align*}
Since $x_*$ and $x_0$ do not depend on $I$, we can apply the expectation and get
\begin{align*}
(d-1) &\mathbb E[f(x_1) - f(x_*)] \\
&\leq (d-1) (f(x_0) - f(x_*)) + \langle \nabla f(x_0), x_* - x_0 \rangle + d\mathbb E\big[ \frac{1}{2} \|x_* - x_0\|^2_L - \frac {1}{2}\|x_* - x_1\|^2_L\big] + \frac{1}{2 d} \|\nabla f(x)\|_{L^{-1}}^2 \\
& \leq (d-2) (f(x_0) - f(x_*)) + d\mathbb E\big[ \frac{1}{2} \|x_* - x_0\|^2_L - \frac {1}{2}\|x_* - x_1\|^2_L\big]
\end{align*}
where we used \eqref{eqn:cocoercivity_d} with $x = x_*$.
\end{proof}

\subsection{Primal-dual coordinate descent}

As a final experiment, we show that we can deal with a complex algorithm, namely PURE-CD~\cite{alacaoglu2020random}, which is a primal-dual algorithm with randomized coordinate updates. It has been designed to solve problems of the form
\begin{equation*}
\min_{x\in\mathcal{X}_1 \times \ldots \times \mathcal X_n} \max_{y \in \mathcal Y} f(x) + \sum_{i=1}^{n}g_i(x_i) +  \langle Mx, y \rangle - h^*(y),
\end{equation*}
$\mathcal{X}$ and $\mathcal{Y}$ are Euclidean spaces such that $\mathcal{X} = \prod_{i=1}^n \mathcal{X}_i$, and $\mathcal{Y}=\prod_{j=1}^m \mathcal{Y}_j$.
The functions $f\colon \mathcal{X} \to \mathbb{R}$, $g_i \colon \mathcal X_i \to \mathbb R \cup \{+\infty\}$ and $h\colon \mathcal{Y} \to \mathbb{R}\cup\{+\infty\}$ are proper, lower semicontinuous, and convex, $M\colon \mathcal{X} \to \mathcal{Y}$ is a linear operator.
Moreover, $f$ is assumed to have coordinatewise Lipschitz continuous partial derivatives with constants $\beta_i$ and $g, h$ admit easily computable proximal operators.

The iterations of PURE-CD are given by Algorithm~\ref{alg:stripd}. Given positive probabilities $(p_i)_{1 \leq i \leq n}$, the algorithm requires the notation $J(i) = \{ j\in\{1, \dots, m\}\colon M_{j, i} \neq 0 \}$, $I(j) = \{ i\in\{1, \dots, n\}\colon M_{j, i} \neq 0 \}$, $\pi_j = \sum_{i \in I(j)} p_i$, $\underline p = \min_i p_i$, $\theta_j = \frac{\pi_j}{\underline p}$ and step size parameters $\tau_i$ and $\sigma_j$ such that
\begin{equation}
\tau_i < \frac{2p_i - \underline{p}}{\beta_i p_i + \underline p^{-1}p_i\sum_{j=1}^m \pi_j \sigma_j M_{j, i}^2}. \label{purecd_stepsizes}
\end{equation}

\begin{algorithm}[h]
	\caption{Primal-dual method with random extrapolation and coordinate descent (PURE-CD)}
	\label{alg:stripd}
	\begin{algorithmic}[1]
		\FOR{$k = 0,1\ldots $} 
		\STATE $\bar{y}_{k+1} = \prox_{\sigma, h^\ast}\left(y_k + \sigma Mx_k\right)$
		\STATE $\bar{x}_{k+1} = \prox_{\tau, g}\left(x_k - \tau \left(\nabla f(x_k) + M^\top \bar{y}_{k+1}\right)\right)$
		\STATE Draw $i_{k+1} \in \{ 1, \dots, n \}$ with $\mathbb{P}(i_{k+1} = i) = p_i$
		\STATE $x_{k+1}^{i_{k+1}} = \bar{x}_{k+1}^{i_{k+1}}$ 
		\STATE $x_{k+1}^{j} = x_k^j, \forall j \neq i_{k+1}$
		\STATE $y_{k+1}^{j} = \bar{y}_{k+1}^j + \sigma_j \theta_j(M (x_{k+1} - x_k))_j, \forall j \in J(i_{k+1})$, $y_{k+1}^j = y_k^j, \forall j\not\in J(i_{k+1})$
		\ENDFOR
	\end{algorithmic}	
\end{algorithm}

We numerically checked the result of \cite{alacaoglu2020random} by looking for a Lyapunov function associated to the algorithm. We considered the case where $f=0$, $n = 2$, $m=1$ and $p = (1/2, 1/2)$, so that $\pi = \pi_1 = 1$ and $\theta = \theta_1 = 2$. We also fixed the condition on $M$ that $\|M_{1, i}\|^2 \leq 1$ for all $i$. This is slightly more general than PDHG and already involves randomness in the algorithm.
In this setting, \eqref{purecd_stepsizes} simplifies into 
$n \tau_i \sigma \|M_{1, i}\|^2 = \gamma < 1$, $\forall i \in \{1, 2\}$.

We looked for a Lyapunov function involving, for $i \in \{1, 2\}$, the vectors $x_\star^i$, $y^*$, $M_{1, i} x_\star^i$, $
M_{1, i}^\top y_*$, $x_0^i$, $y_0$, $M_{1, i} x_0^i$, $M_{1, i}^\top y_0$, 
$(h^*)'(\bar y_1) \in \partial h^*(\bar y_1)$, $M_{1, i}^\top \bar y_1$, $g_i'(\bar x_1^i) \in \partial g_i(\bar x_1^i)$ and $M_{1,i} \bar x_1^i$ as well as the nonnegative quantities defined by $D_P(x_0) = \sum_i g_i(x_0^i) - g_i(x_*^i) + y_\star^\top M (x_0 - x_\star)$,
$D_D(y_0) = h(\bar y_1) - h(y_\star) - (\bar y_1 - y_\star)^\top M x_\star$, $\mathbb E[\|y_1 - y_\star\|^2]$ and $\mathbb E[\|x_1^i - x_\star^i\|^2]$. Moreover, we consider the expected difference between successive iterates as the measure of convergence $R(z_0, z_\star) = \mathbb E[ \sum_{i=1}^n \frac 1{\tau_i} \|x_0^i - x_1^i\|^2 + \frac{1}{\sigma} \|y_0 - y_1\|^2]$.

After compilation with PEPit and DSP-CVXPY, we obtain two semi-definite programs, each with 7 SDP constraints involving 1409 variables, 63 affine inequality constraints and more that 1000 equality constraints. These are then sent to the SCS solver~\cite{odonoghue:21} for numerical resolution.

For $\gamma =1.1$, which is beyond what is proved in \cite{alacaoglu2020random}, we get 
\[
\min_{\{V \in \mathcal Q\}} \max_{\{(f, g, L) \in \mathcal F, z \in \mathcal Z, z^* \text{ saddle point}\}} \mathbb E[V(\text{PURE-CD}(f,g,L, z))] + R(z) - V(z) \approx 0.5 > 0
\]
This shows that there does not exist a Lyapunov function for this algorithm which is a combination of quadratic terms in the vectors listed above and conic combination of the nonnegative quantities listed above. To get a more precise answer, we should either find a counter-example where the algorithm diverges or a Lyapunov function that will necessarily be more general.

For $\gamma = 0.9$, we get
\[
\min_{\{V \in \mathcal Q\}} \max_{\{(f, g, L) \in \mathcal F, z \in \mathcal Z, z^* \text{ saddle point}\}} \mathbb E[V(\text{PURE-CD}(f,g,L, z))] + R(z) - V(z) \approx 0
\]
which numerically confirms the convergence of PURE-CD.

In theory, since this is what is proved in \cite{alacaoglu2020random}, we could also obtain a more general Lyapunov function that does not require that $z_\star$ should be a saddle point. However, except for the case $n=1$, the solver struggles to solve the problem and terminates with an inaccurate solution.

\section{Conclusion}

In this paper, we showed how to combine several recent works on
performance estimation problems and numerical solvers in order
to discover automatically Lyapunov functions for optimization algorithms.
We then showed how this technique can be used to make discoveries 
on the properties of complex optimization algorithms. Firstly, we studied the linear convergence of primal-dual hybrid gradient under quadratic error bound of the smooth gap with a larger step size range than previous works. Secondly, we could conjecture and prove a slightly improved 
convergence rate result for randomized coordinate descent. Thirdly, we 
experienced the limits of the approach when studying a randomized primal-dual coordinate update method. 

This work paves the way to the use of performance estimation problems 
to investigate complex questions. Note however, that in order to really get results, we need semi-definite programming solvers able to accurately solve large and ill-conditioned problems. In future works, we plan to use the automatic Lyapunov function method to study algorithms for nonconvex-nonconcave saddle point problems.

\section*{Acknowledgment}

This work was supported by the Agence National de la Recherche grant ANR-20-CE40-0027, Optimal Primal-Dual Algorithms (APDO).

\bibliographystyle{alpha}
\bibliography{literature.bib}
	
\end{document}